\documentclass[12pt]{amsart}  % draft

\usepackage{custom_preamble,bbm} % showkeys

\newcounter{pblm}

\theoremstyle{definition}

\begin{document}

\title{The stability of finite sets in dyadic groups}

\begin{abstract}
We show that there is an absolute $c>0$ such that any subset of $\F_2^\infty$ of size $N$ is $O(N^{1-c})$-stable in the sense of Terry and Wolf.  By contrast a size $N$ arithmetic progression in $\Z$ is not $N$-stable.
\end{abstract}

\author{\tsname}
\address{\tsaddress}
\email{\tsemail}

\maketitle

\setcounter{section}{1}

Arithmetic stability is a concept introduced by Terry and Wolf in \cite{terwol::0} which quickly leads to many lines of questions. It is the purpose of this note to highlight one such line.

Formally, suppose that $G$ is an Abelian group and $A \subset G$.  For $k \in \N$ we say $A$ has the \textbf{$k$-order property (in $G$)} if there are vectors $s,t \in G^k$ such that $s_i +t_j \in A$ if and only if $i \leq j$ -- we say that $s$ and $t$ \textbf{witness} the $k$-order property -- and it is \textbf{$k$-stable} if it does not have the $k$-order property.

The definition of the $k$-order property is inspired by the model-theoretic order property of formulas \cite[Definition 2.2]{she::0}.  Indeed, in the setup of \cite{she::0}, the formula $x+y \in A$ has the order property if and only if $A$ has the $k$-order property for all $k \in \N$.

The model-theoretic stability of a formula has many equivalent definitions (see \cite[\S2]{she::0}) one of which is the negation of the order property.  (Hence the terminology above.)  It also extends to theories; a simple example of a stable theory is the theory of $(G,+,0)$ when $G$ is a direct sum of a divisible group and a group of bounded order \cite[Theorem 1]{mac::1}\footnote{And the fact that totally transcendental theories are stable.}.

The problem of determining when stability is preserved under the adjoining of a predicate was opened up in \cite{caszie::}, and the particular case of adjoining sets of integers as unary predicates to the theory of $(\Z,+,0)$ has been pursued recently in \cite{palskl::,con::4} and \cite{con::2}.  Adjoining the natural numbers makes the theory unstable, and the present work might be seen as part of a quantitative investigation into whether there are subsets of dyadic groups that behave as badly as the naturals do in the integers.

We say that sets $S$ and $T$ \textbf{witness} the $k$-order property if there is an enumeration of $S$ as $s_1,\dots,s_k$ and $T$ as $t_1,\dots, t_k$ such that $s$ and $t$ witness the $k$-order property.  It is useful to observe that if $s,t \in G^k$ witness the $k$-order property in $A$ and $\sigma$ and $\tau$ are permutations of $\{1,\dots,k\}$ such that $s':=(s_{\sigma(i)})_i$ and $t':=(t_{\tau(j)})_j$ also witness the $k$-order property in $A$, then $\sigma$ and $\tau$ are both the identity permutation.\footnote{Indeed, since $\tau$ is a surjection and $s$ and $t$ witness the $k$-order property the set $\{\tau(j): s_{\sigma(i)}+t_{\tau(j)} \in A\}$ has size $k+1-\sigma(i)$.  On the other hand since $s'$ and $t'$ witness the $k$-order property the set $\{j:s_{\sigma(i)}+t_{\tau(j)} \in A\}=\{j: s'_i+t'_j \in A\}$ has size $k+1-i$.  Since $\tau$ is a bijection $k+1-\sigma(i)=k+1-i$ and so $\sigma(i)=i$, and the same arguments work for $\tau$.} In particular, if $s,t \in G^k$ witness the $k$-order property then the $s_i$s are all distinct (and similarly for the $t_j$s); and if sets $S$ and $T$ witness the $k$-order property then the associated enumerations are unique.

We begin with the following simple proposition.
\begin{proposition}\label{prop.upper}
Suppose that $A \subset G$ has size $N$.  Then $A$ is $(N+1)$-stable.
\end{proposition}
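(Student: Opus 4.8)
The plan is to argue by contradiction, extracting $N+1$ distinct elements of $A$ from any purported witness of the $(N+1)$-order property. Suppose $A$ has the $(N+1)$-order property, witnessed by vectors $s,t \in G^{N+1}$, so that $s_i + t_j \in A$ if and only if $i \le j$ for all $i,j \in \{1,\dots,N+1\}$.

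The key observation I would use is that a single row of the incidence pattern lies entirely inside $A$. Fixing $i=1$, the defining condition $s_1 + t_j \in A \iff 1 \le j$ holds for every $j$, since $1 \le j$ is automatic. Hence the $N+1$ elements $s_1 + t_1,\dots,s_1 + t_{N+1}$ all belong to $A$. To see that they are distinct I would invoke the fact, recorded just before the statement, that the coordinates $t_1,\dots,t_{N+1}$ of a witnessing vector are pairwise distinct; translating them all by the fixed element $s_1$ preserves this distinctness. This produces $N+1$ distinct elements of $A$, contradicting $|A|=N$, so no such witness can exist and $A$ is $(N+1)$-stable. Symmetrically, one could run the argument down the last column $j=N+1$, using distinctness of the $s_i$ instead.

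I do not expect a genuine obstacle here: once the distinctness of the coordinates is in hand the argument is immediate. The only point that needs a little care is the choice of a full row (or column) rather than the diagonal. Indeed, the diagonal entries $s_i + t_i$ need not be distinct in a general Abelian group, so one cannot simply count them; along a row, by contrast, the injectivity of $j \mapsto t_j$ is exactly the ingredient that makes the count work.
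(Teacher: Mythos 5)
Your proposal is correct and is essentially identical to the paper's own proof: both take the first row $\{s_1+t_j : 1 \le j \le k\}$, which lies in $A$ since $1 \le j$ always holds, and use the distinctness of the $t_j$'s (noted before the statement) to count $k$ distinct elements of $A$. The only cosmetic difference is that you phrase it as a contradiction with $k=N+1$ while the paper deduces $k \le N$ directly for any witnessed $k$.
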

\begin{proof}
Suppose that $s$ and $t$ witness the $k$-order property in $G$.  Since the $t_j$'s are distinct, $\{s_1+t_j: 1\leq j \leq k\}$ is a set of $k$ elements contained in $A$ so $k \leq N$, and the result is proved.
\end{proof}
In some groups this is best possible.
\begin{proposition}\label{prop.ap}
Suppose that $A$ is an arithmetic progression of length $N$ in $\Z$.  Then $A$ is not $N$-stable.
\end{proposition}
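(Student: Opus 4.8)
The plan is to exhibit an explicit pair of vectors witnessing the $N$-order property in $A$. By Proposition~\ref{prop.upper} an $N$-element set can be at most $(N+1)$-stable, so this construction must be tight: for each $i$ the index set $\{j : s_i + t_j \in A\}$ has to be exactly the up-set $\{j : j \geq i\}$, of size $N - i + 1$.

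First I would reduce to the standard progression $A_0 = \{0,1,\dots,N-1\}$. The $k$-order property is preserved under affine substitution: writing a length-$N$ progression as $A = dA_0 + a$ with $d \neq 0$, if $s^0, t^0 \in \Z^N$ witness the property for $A_0$ then $s_i := ds^0_i + a$ and $t_j := dt^0_j$ witness it for $A$, since $s_i + t_j = d(s^0_i + t^0_j) + a$ lies in $A$ if and only if $s^0_i + t^0_j \in A_0$.

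The crux is the choice of enumeration for $A_0$. The naive attempt with both $s$ and $t$ increasing fails, because $A_0$ is an interval and so $\{j : s_i + t_j \in A_0\}$ comes out as a \emph{sub-interval} of indices rather than the required up-set. The device I would use is to run the two enumerations in opposite directions, making $s_i + t_j$ a function of $j - i$ alone: take $s_i = i-1$ and $t_j = N-j$ for $i,j \in \{1,\dots,N\}$, so that $s_i + t_j = (N-1) - (j-i)$. Membership in $A_0$ then amounts to $0 \leq (N-1) - (j-i) \leq N-1$; the upper inequality is equivalent to $i \leq j$, while the lower inequality rearranges to $j - i \leq N-1$ and holds automatically for all $i,j$ in range. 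Hence $s_i + t_j \in A_0$ if and only if $i \leq j$, which is the $N$-order property. I do not expect a real obstacle beyond getting the two directions right; that is exactly what converts interval-membership into the triangular condition, and the affine bookkeeping in the first step is routine.
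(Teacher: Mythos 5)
Your proof is correct and is essentially the paper's argument: the paper likewise writes $A=\{x,x+d,\dots,x+(N-1)d\}$ and takes the explicit witnesses $s_i:=x-id$, $t_j:=jd$, so that $s_i+t_j=x+(j-i)d$ depends only on $j-i$ and membership reduces to $0\leq j-i\leq N-1$, where one inequality gives $i\leq j$ and the other is automatic. Your preliminary reduction to $\{0,1,\dots,N-1\}$ is a harmless extra step; the core device of running the two enumerations in opposite directions is the same.
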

\begin{proof}
Write $A=\{x,x+d,\dots,x+(N-1)d\}$, and let $s_i:=x-id$ and $t_i=id$ for $1 \leq i \leq N$.  A simple check shows $s,t \in \Z^N$ witness the $N$-order property. (\emph{c.f.} \cite[Lemma 6.3]{sis::1}.)
\end{proof}
Given this we ask what happens in groups not containing long arithmetic progressions.  The dyadic group $\F_2^\infty$ (the direct sum of $\F_2$ with itself countably many times) is the prototypical example of such a group.  Its study has been tremendously useful in additive combinatorics, and we refer the reader to the survey \cite{gre::9} by Green and the sequel \cite{wol::3} by Wolf for more information.
\begin{theorem}\label{thm.main}
There is $c>0$ such that every $A \subset \F_2^\infty$ of size $N$ is $O(N^{1-c})$-stable.
\end{theorem}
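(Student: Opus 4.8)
The plan is to argue by contraposition and to induct on the dimension of the ambient space, using additive combinatorics to show that a \emph{long} order property can only live inside a \emph{low-dimensional} subspace.

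First I would pass to a finite group. If $s,t \in (\F_2^\infty)^k$ witness the $k$-order property in $A$, all the relevant sums $s_i+t_j$ lie in the finite subspace $G:=\langle s_i,t_j\rangle$, so replacing $A$ by $A\cap G$ (which only shrinks its size) we may assume $A\subseteq G=\F_2^n$. Two elementary observations then frame the problem. Writing $M_{ij}=\mathbbm{1}_A(s_i+t_j)=\mathbbm{1}[i\le j]$, a collision $s_i+t_j=s_{i'}+t_{j'}$ between two upper-triangular pairs forces the four sums $s_i+t_j,\,s_{i'}+t_{j'},\,s_i+t_{j'},\,s_{i'}+t_j$ to take only two values, and matching the memberships forces the $2\times 2$ pattern to be constant, i.e.\ $\max(i,i')\le\min(j,j')$; in particular the diagonal sums $s_1+t_1,\dots,s_k+t_k$ are distinct, recovering Proposition~\ref{prop.upper}. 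The content of this computation is really negative: on a complete block $R\times C$ with $\max R\le\min C$ the separation constraint is vacuous, so any gain over the linear bound must come from bounding additive collisions on exactly such blocks, that is, from genuine sumset estimates.

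Here is the main line. Suppose $A$ is not $O(N^{1-c})$-stable, so it has the $k$-order property with $k>CN^{1-c}$. Splitting $\{1,\dots,k\}$ at its midpoint and setting $X=\{s_i:i\le k/2\}$, $Y=\{t_j:j>k/2\}$, every $s_i+t_j$ with $i\le k/2<j$ satisfies $i\le j$, so $X+Y\subseteq A$ with $|X|=|Y|=\lfloor k/2\rfloor$. Ruzsa's triangle inequality then gives $|X+X|\le|X+Y|^2/|Y|\le K|X|$ with $K\le N^{O(c)}$, and the polynomial Freiman--Ruzsa theorem traps $X$ inside an affine subspace of size $K^{O(1)}|X|\le N^{1+O(c)}$. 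Applying the same extraction to the blocks $\{s_i:i\in I\}+\{t_j:j\in J\}$ with $\max I\le\min J$ along a dyadic decomposition of the index set traps every $s_i$ and every $t_j$ into the cosets of a single subspace $V$, so the whole configuration, and with it the $k$-order property, descends to $A\cap(v_0+V)$, now living in dimension $\dim V<n$ with ambient set size still at most $N$. An induction on $n$ closes the argument, since the descended instance obeys the inductive bound $k\le C|A\cap(v_0+V)|^{1-c}\le CN^{1-c}$, contradicting $k>CN^{1-c}$.

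The hard part is making this descent quantitatively honest, and it is where the value of $c$ is decided. The separation computation shows that \emph{every} sumset-in-$A$ block has separated rows and columns and so misses the diagonal, so no single block traps the near-diagonal vectors that actually carry the order property; trapping all of them needs the dyadic iteration, which inflates the dimension (naively to $O(\log^2 N)$) and threatens to stall the induction before $n$ is reduced, so one must either trap far more efficiently or strengthen the inductive statement to absorb the loss. Worse, the recursion bottoms out in the genuinely awkward regime of a low-dimensional but \emph{sparse} $A$, where neither Freiman--Ruzsa nor density buys anything; handling it seems to require a separate Fourier input, namely that the triangular pattern $M$ always forces a nontrivial character $\gamma$ with $|\widehat{\mathbbm{1}_A}(\gamma)|$ large. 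Such a $\gamma$ can be coaxed out of the exact identity $M^{-1}=I-S$ for the shift $S$: expanding $k=\mathrm{tr}(M^{-1}M)$ as a weighted character sum isolates a large coefficient, which should feed a density increment on the hyperplane $\gamma^\perp$. Balancing that increment against the dimension drop is, I expect, the crux.
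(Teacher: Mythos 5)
Your reduction to a finite group and your collision analysis (the $2\times 2$ pattern argument showing the diagonal sums $s_i+t_i$ are distinct) are both correct and match steps in the paper, and your use of the Ruzsa triangle inequality on the block $X+Y\subset A$ to extract small doubling is in the same spirit as the paper's first lemma. But there are two genuine gaps after that. First, the polynomial Freiman--Ruzsa theorem does not ``trap $X$ inside an affine subspace of size $K^{O(1)}|X|$'': that statement is false (a set containing $m$ linearly independent points has affine span of size at least $2^{m-1}$ regardless of its doubling). What PFR gives is a covering of $X$ by $K^{O(1)}$ cosets of a subspace of size at most $|X|$; to descend to a single coset you must pigeonhole down to those indices $i$ for which $s_i$ and $t_i$ land in one fixed pair of cosets, and this costs a multiplicative factor $K^{O(1)}=N^{O(c)}$ in the length of the order property at every step. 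This is exactly why your induction on dimension stalls, as you yourself suspect: after one descent the set has density about $N^{-O(c)}$ in its new ambient subspace, the surviving order property has length $k/N^{O(c)}$, iterating never improves the density beyond $N^{-O(c)}$, and the inductive hypothesis $k'\leq C|A'|^{1-c}$ only yields $k\leq CN^{1-c+O(c)}$ --- no contradiction. (The paper avoids both problems by using Ruzsa's modelling lemma instead of PFR: it keeps the middle indices $l\leq i\leq j\leq k-l$, bounds $|S^++T^+|$ via the triangle inequality through the corner blocks that do lie in $A$, and then finds a homomorphism into a small $\F_2^n$ that is \emph{injective} on $S^++T^+$, which transports the order property with no loss in length beyond discarding the extreme indices.)

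The second and more serious gap is the endgame. Any version of this strategy terminates with a set that is polynomially dense in some finite $\F_2^n$ and still has a long order property, so one needs a power-saving bound in the dense regime: the paper's second lemma, that every $A\subset\F_2^n$ is $O(2^{(1-c)n})$-stable. This is where the real work lies, and it is done with the Croot--Lev--Pach/Ellenberg--Gijswijt polynomial method: one builds a polynomial $P$ of degree at most $d$ vanishing off $A$ whose support contains half of the (distinct) diagonal elements $s_i+t_i$, so the matrix $(P(s_i+t_j))_{i,j}$ has triangular structure with nonzero diagonal on many rows and hence rank at least $k/2$, while the decomposition $P(x+y)=\sum_m m(x)F_m(y)+\sum_{m'}F'_{m'}(x)m'(y)$ over monomials of degree at most $d/2$ bounds the rank by $2\dim S_n^{d/2}$; balancing the entropy estimates gives the power saving. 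Your proposed substitute --- extracting a large Fourier coefficient from $\mathrm{tr}(M^{-1}M)$ and running a density increment --- is not developed, and there is strong reason to expect it cannot work as stated: in this regime Fourier-analytic increments are only known to give logarithmic-type savings (as in Meshulam's cap-set bound), and obtaining power savings of the form $2^{(1-c)n}$ is precisely what required the polynomial method. Without that ingredient, or something equally strong in its place, the argument does not close.
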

\cite[Example 3]{terwol::0} shows that $c$ cannot be improved past $\frac{1}{2}$; we shall develop this to give the following.
\begin{proposition}\label{prop.lower}
There is $c>0$ such that for all $N \in \N$ there is a set $A \subset \F_2^\infty$ of size $N$ that is not $\Omega(N^{\frac{1}{2}+c})$-stable.
\end{proposition}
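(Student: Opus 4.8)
The plan is to recast the statement as a problem of \emph{minimising the number of distinct sums}. By the discussion preceding Proposition~\ref{prop.upper}, to exhibit a small set $A$ realising the $k$-order property it is equivalent to find distinct $u_1,\dots,u_k$ and distinct $v_1,\dots,v_k$ in $\F_2^\infty$ whose \emph{upper sums} $\{u_i+v_j:i\leq j\}$ are disjoint from their \emph{lower sums} $\{u_i+v_j:i>j\}$, and then to take $A$ to be the set of upper sums, so that $N$ is the number of distinct upper sums. Since the $v_j$ are distinct, each value of $A$ is of the form $u_i+v_j$ for at most one $j$ per $i$, whence $N\geq\binom{k+1}{2}/k$; and choosing the $u_i,v_j$ in general position makes all $\binom{k+1}{2}$ upper sums distinct, giving $N=\binom{k+1}{2}$ and order property of length $k=(1+o(1))\sqrt{2N}$. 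This is the regime of \cite[Example 3]{terwol::0}, and to improve on it I must force many pairs $i\leq j$ to share a single value of $u_i+v_j$ while never letting an upper sum coincide with a lower sum.

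First I would record the obstruction that makes this delicate. In $\Z$ the analogous task is solved by an arithmetic progression (Proposition~\ref{prop.ap}): all pairs with a common difference $j-i$ collide, so $N=k$. In $\F_2^\infty$ this route is doubly blocked: there are no long progressions, and, more fundamentally, the progression collisions $(i,j)\sim(i+s,j+s)$ can be imposed only if the realised order is invariant under translation by $s$, which no total order can be, since translation by $s\neq0$ is an involution. The collisions must therefore be engineered in a genuinely asymmetric, non-translation-invariant way; in particular any attempt based on a small sumset is self-defeating, because in $\F_2^\infty$ a small sumset forces a subgroup structure and hence the symmetric ``transposition'' collisions $(i,j)\sim(j,i)$, which mix upper with lower sums.

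The construction I would attempt starts from the bilinear model and compresses it linearly. Take $u_i=(e_i,0)$ and $v_j=(0,e_j)$ in $\F_2^k\oplus\F_2^k$ with $A=\{(e_i,e_j):i\leq j\}$; all $k^2$ sums $(e_i,e_j)$ are distinct, so $N=\binom{k+1}{2}$. Now pass to the quotient by a subspace $L\leq\F_2^k\oplus\F_2^k$, replacing each point by its image: two pairs then collide exactly when their difference $(e_i+e_{i'},\,e_j+e_{j'})$ lies in $L$. Choosing $L$ to contain many \emph{upper--upper} differences (those with $i\leq j$ and $i'\leq j'$) clusters the upper pairs into few cosets, while forbidding $L$ to contain any \emph{mixed} difference (one pair upper, the other lower) preserves the disjointness of the upper and lower sums. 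The aim is to arrange that the $\binom{k+1}{2}$ upper pairs fall into only about $k^{2-\delta}$ cosets of $L$, each absorbing about $k^{\delta}$ of them, with no coset containing both an upper and a lower pair; this would give $N\leq k^{2-\delta}$ and hence order property of length at least $N^{1/(2-\delta)}=N^{1/2+c}$.

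The hard part, which I expect to be the main obstacle, is producing such an $L$. At random one cannot cluster the upper pairs without also merging some upper pair with a lower one: the upper--upper and the mixed difference sets are of comparable size (each consisting of about $k^{4}$ weight-$\leq2$ vectors) and are thoroughly interleaved, so a subspace absorbing many of the former inevitably meets the latter; likewise the one-sided choices $L=X\oplus0$ and $L=0\oplus Y$ are easily seen to contain mixed differences. I would therefore search for an explicit, \emph{order-aligned} subspace -- most plausibly built from the field structure on $\F_{2^m}$ or from a layered system of linear relations -- and the substance of the argument would be to verify the clustering bound and the disjointness of upper and lower sums simultaneously. Granted one such gadget for a sequence of $k$ with bounded multiplicative gaps (for instance all $k=2^m$), the proposition follows by routine bookkeeping: an arbitrary $N$ is reached by adjoining $N-N_0$ points in fresh coordinates to the largest gadget with $N_0\leq N$, which cannot coincide with any relevant sum and so leaves the order property intact, giving length at least a constant times $N^{1/2+c}$ with $c=\tfrac{\delta}{2(2-\delta)}>0$.
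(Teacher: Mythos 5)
Your reformulation is sound: realising the $k$-order property with a small set is exactly the problem of compressing the $\binom{k+1}{2}$ upper sums $\{s_i+t_j : i \leq j\}$ while keeping them disjoint from the lower sums, and the paper's construction can in fact be viewed as an instance of your quotient-of-the-bilinear-model framework (the collisions are governed by the kernel of the linear map $\F_2^k\oplus\F_2^k \to \F_2^\infty$, $(x,y)\mapsto \sum_i x_is_i+\sum_j y_jt_j$). But the proposal has a genuine gap, and it sits exactly where you flag it: you never construct the subspace $L$ that clusters the upper pairs into roughly $k^{2-\delta}$ cosets without ever identifying an upper pair with a lower one. The sentence ``Granted one such gadget \dots the proposition follows by routine bookkeeping'' concedes the entire content of the proposition. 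Everything before that point is a restatement of the problem, and everything after it is indeed routine; what remains unproved is precisely the theorem.

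For comparison, here is the gadget the paper builds; note that it \emph{circumvents} rather than confronts your heuristic obstruction that ``small sumset forces subgroup structure and hence mixes upper with lower.'' Index the witnesses in two levels: take $R=\binom{2l}{l}$ blocks of size $2^l$, so $k=R2^l$, and enumerate the $l$-element subsets $S_1,\dots,S_R$ of $[2l]$ so that $S_r$ and $S_{R+1-r}$ are complementary. The $b$-th block of the $s$'s is $u_b+V_{S_b}$ and the $b'$-th block of the $t$'s is $w_{b'}+V_{S_{R+1-b'}}$, where $V_S$ is the coordinate subspace supported on $S$ and the translates $u_b+w_{b'}+V_{[2l]}$ are chosen pairwise disjoint. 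Every sum $s_i+t_j$ then lands in the coset $u_b+w_{b'}+\bigl(V_{S_b}+V_{S_{R+1-b'}}\bigr)$, so collisions can only occur within a single block pair. For $b<b'$ \emph{every} pair in the block pair is upper, so the collisions forced by the subgroup $V_{S_b}+V_{S_{R+1-b'}}$ -- which has size $2^{2l-|S_b\cap S_{R+1-b'}|}$, typically far smaller than the $2^{2l}$ pairs it absorbs -- are harmless and supply the compression; for $b>b'$ every pair is lower, equally harmless; and on the diagonal $b=b'$ complementarity gives $S_b\cap S_{R+1-b}=\emptyset$, so $(a,a')\mapsto v^{(b)}_a+v^{(R+1-b)}_{a'}$ is injective and the within-block order is realised with no collisions at all. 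In other words, coset structure \emph{is} used aggressively, but only on regions where the order relation is constant -- this localisation is the idea your proposal is missing. The count then gives $k=\binom{2l}{l}2^l$ against $|A|\leq 2^{4l}\left(1+\tfrac{1}{\sqrt{2}}\right)^{2l}$, i.e.\ $k\geq N^{\frac{1}{2-c}-o(1)}$ with $c=0.152\dots$, which is of your claimed shape $N^{\frac{1}{2}+c'}$ with an explicit $c'>0$.
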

Before turning to the proofs we make two remarks.  First, the notion of stability is of interest in the papers \cite{terwol::0,terwol::1} of Terry and Wolf when a set has small stability, and it is not clear to us whether our work, which sits at the other end of the scale, genuinely gets at mathematically significant issues.

Secondly, it may well be that there is a rather more direct combinatorial argument and allied lower-bound construction that significantly simplifies and strengthens the work here; certainly we do not know how to rule out such a possibility.

Our proof of Theorem \ref{thm.main} decouples into two parts.  We begin with the `modelling' part.
\begin{lemma}\label{lem.deco}
Suppose that $A \subset \F_2^\infty$ has the $k$-order property; $|A| \leq Kk$; and $1 \leq l < \frac{1}{4}k$ is an integer with $l=\eta k$.  Then there is a natural number $n$ and set $A' \subset \F_2^n$ with $2^n=O(\eta^{-10}K^{15}k)$ that has the $(k-2l+1)$-order property.
\end{lemma}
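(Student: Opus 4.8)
The plan is to build the model from a single linear projection and to pay for its imperfections by deleting a few indices. Fix $s,t \in (\F_2^\infty)^k$ witnessing the $k$-order property. Since any element of $A$ that is not of the form $s_i+t_j$ is never tested, and since $s_i+t_j \in A$ fails whenever $i>j$, I may assume $A = \{s_i+t_j : i \le j\}$; this keeps the order property and gives $|A| \le \min\!\bigl(\binom{k+1}{2},Kk\bigr)$. For a linear map $\pi \colon \F_2^\infty \to \F_2^n$ write $\sigma_i = \pi(s_i)$, $\tau_j=\pi(t_j)$ and $A' = \pi(A) \subseteq \F_2^n$. For every $i \le j$ we automatically have $\sigma_i+\tau_j = \pi(s_i+t_j) \in A'$, so the order property can only break through a \emph{bad pair}: a pair $(i,j)$ with $i>j$ but $\sigma_i+\tau_j \in A'$. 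If the bad pairs admit a vertex cover $W \subseteq \{1,\dots,k\}$ with $|W| \le 2l-1$, then on $I = \{1,\dots,k\}\setminus W$ there are no bad pairs, and re-enumerating $I$ increasingly the vectors $(\sigma_i)_{i\in I}$ and $(\tau_j)_{j \in I}$ witness the $(k-2l+1)$-order property for $A' \subseteq \F_2^n$. Thus the lemma reduces to producing a linear $\pi$ of rank $n$ with $2^n = O(\eta^{-10}K^{15}k)$ whose bad pairs have a vertex cover of size at most $2l-1$.

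What makes a rank with $2^n$ \emph{linear} in $k$ plausible is the additive structure forced by the order property. Because the $\binom{k+1}{2}$ sums $s_i+t_j$ $(i\le j)$ collapse into the $\le Kk$ values of $A$, the set $A$ has large additive energy: the quadruples $(s_i+t_k,\,s_{i'}+t_k,\,s_i+t_j,\,s_{i'}+t_j)$ with $i,i' \le j \le k$ all lie in $A$ and satisfy $(s_i+t_k)+(s_{i'}+t_k)=(s_i+t_j)+(s_{i'}+t_j)$, and this parametrisation is injective, so $E(A) \ge \tfrac13 k^3 \ge |A|^3/(3K^3)$. Balog--Szemer\'edi--Gowers then yields $A_0 \subseteq A$ with $|A_0| \gg |A|/K^{O(1)}$ and $|A_0+A_0| \le K^{O(1)}|A_0|$, and Pl\"unnecke--Ruzsa bounds every bounded-fold sumset by $|mA_0| \le K^{O(1)}k$. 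This is the structured core on which a generic $\pi$ can be forced to be injective, and it also controls the lower triangle $\{s_i+t_j : i>j\}$, since $s_i+t_j = (s_i+t_i)+(t_i+t_j)$ places it inside $A+(A+A)$.

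With this structure I would take $\pi$ to be a uniformly random surjection $\F_2^\infty \to \F_2^n$ with $2^n = \Theta(\eta^{-10}K^{15}k)$, conditioned so that $\ker\pi$ avoids the bounded sumsets $mA_0$ (possible as these have size $K^{O(1)}k \ll 2^n$) and is therefore injective on the core. The bad pairs then arise only from the relatively few representation-heavy differences $d=s_i+t_j+a$ and from the unstructured remainder $A \setminus A_0$, and the deletion budget $2l-1 = 2\eta k-1$ is exactly what is spent covering these. Once the vertex cover is bounded, Ruzsa's modeling lemma guarantees that the required $\pi$ lands in $\F_2^n$ with the stated $2^n$, so the retained indices witness the $(k-2l+1)$-order property for $A'$.

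The main obstacle is the passage from an expectation to a vertex-cover bound. A blunt union bound over all $\binom{k}{2}$ pairs $(i,j)$ and all $a\in A$ gives $\sim Kk^3 2^{-n}$ bad pairs in expectation, which is quadratic in $k$ and far too large for any $2^n$ linear in $k$; moreover an expected count alone only bounds the \emph{number} of bad pairs, not the size of a cover. The real work is therefore to use the Pl\"unnecke control of $A+(A+A)$ (equivalently, the fact that the popular differences $d$ are additively structured) to show that the bad pairs are \emph{concentrated} on few indices, and to balance the structured and unstructured error terms; it is this calibration that I expect to produce the exponents $10$ and $15$ and to fix the admissible deletion at precisely $2l-1$, and it is the step whose details I am least sure of in advance.
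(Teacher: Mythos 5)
Your reduction to ``bad pairs'' and index deletion is sound as bookkeeping, but the proposal has a genuine gap exactly where you flag it, and that gap is the entire content of the lemma: you never produce a homomorphism $\pi$ with $2^n=O(\eta^{-10}K^{15}k)$ whose bad pairs admit a vertex cover of size $2l-1$. Moreover, the mechanism you sketch cannot supply it. As you compute, a random $\pi$ at this value of $2^n$ has $\Theta(Kk^32^{-n})=\Theta(\eta^{10}K^{-14}k^2)$ expected bad pairs, quadratic in $k$ against a linear budget $2l-1=2\eta k-1$; and the structural input you propose to beat this with does not control the relevant sums. Balog--Szemer{\'e}di--Gowers gives small doubling only for some dense subset $A_0\subset A$, and Pl{\"u}nnecke--Ruzsa then bounds $|mA_0|$, not $|A+A+A|$: the containment $s_i+t_j\in A+(A+A)$ for $i>j$ involves $A$ itself, which need not have small doubling, so conditioning $\ker\pi$ to avoid the sets $mA_0$ says nothing about collisions $\pi(s_i+t_j)=\pi(a)$ that create bad pairs. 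A further structural loss is that BSG forgets the index structure entirely, so even a well-behaved core $A_0$ gives no handle on which pairs $(i,j)$ it governs. The ``calibration'' you defer is therefore not a calibration of constants; it is the missing proof, and the exponents $10$ and $15$ do not arise from any such balancing.

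The paper closes precisely this hole deterministically, avoiding both BSG and randomness (it even remarks that BSG is tempting but that a direct route is easier). The idea you are missing is to use the extreme indices as buffers: with $S'=\{s_i:i\leq l\}$, $T'=\{t_j:j\geq k-l\}$, $S^+=\{s_i:l\leq i\leq k-l\}$ and $T^+=\{t_j:l\leq j\leq k-l\}$, the order property forces $S'+T^+$, $S'+T'$ and $S^++T'$ to lie in $A$, and the Ruzsa triangle inequality then bounds the \emph{full} middle rectangle of sums,
\begin{equation*}
|S^++T^+|\leq \frac{|S^++T'||T'+S'||S'+T^+|}{|T'||S'|}\leq \eta^{-2}K^3k,
\end{equation*}
crucially including every lower-triangle sum $s_i+t_j$ with $i>j$ in the middle range. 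The Green--Ruzsa modelling argument (take $n$ minimal admitting a homomorphism $\phi$ injective on $S^++T^+$; minimality forces $2^n=|\phi(S^+)-\phi(S^+)+\phi(T^+)-\phi(T^+)|$, which Petridis/Pl{\"u}nnecke bounds by $2^4\eta^{-10}K^{15}k$) then yields a single $\phi$ injective on all of $S^++T^+$ at once. Injectivity on the whole rectangle means there are \emph{no} bad pairs among the middle indices: the $2l$ discarded indices are the buffers themselves, not a vertex cover of random failures. If you want to salvage your outline, that is the statement to aim for --- injectivity of one homomorphism on the whole middle rectangle of sums --- rather than control of collisions in expectation.
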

It is tremendously tempting to note that the Balog-Szemer{\'e}di-Gowers theorem \cite[Theorem 2.29]{taovu::} applies to show that if $A \subset \F_2^\infty$ has the $k$-order property (witnessed by sets $S$ and $T$) and $|A| \leq Kk$ then there are sets $S' \subset S$ and $T' \subset T$ with $|S'|,|T'| = \Omega(k)$ and $|S'+T'| =O(K^{3}k)$.  While this seems very well suited, in fact we shall find it easier to proceed directly.
\begin{proof}
Let $s,t \in (\F_2^\infty)^k$ witness the $k$-order property in $A$, and let $G$ be the group generated by $A\cup\{t_1,\dots,t_k,s_1,\dots,s_k\}$ (which is finite) and recall that the $s_i$'s and $t_j$'s are distinct.  Put
\begin{equation*}
S':=\left\{s_i: 1 \leq i \leq l\right\} \text{ and } S^+:=\left\{s_i : l \leq i \leq k-l\right\},
\end{equation*}
and
\begin{equation*}
T':=\left\{t_j: k-l\leq j \leq k\right\} \text{ and }T^+:=\left\{t_j: l \leq j \leq k-l\right\}.
\end{equation*}
It follows that
\begin{equation*}
|S'|,|T'| \geq l, |S^+|,|T^+| \geq k-2l \text{ and } S'+T^+,S'+T',S^++T' \subset A.
\end{equation*}
The Ruzsa triangle inequality \cite[Lemma 2.6]{taovu::} then gives
\begin{align*}
|S^++T^+|& \leq \frac{|S^++T'||-T'+T^+|}{|-T'|} = \frac{|S^++T'||T'-T^+|}{|T'|}\\ & \leq \frac{|S^++T'||T'+S'||-S'-T^+|}{|T'||-S'|} = \frac{|S^++T'||T'+S'||S'+T^+|}{|T'||S'|}\\
& \leq \eta^{-2}K^3k \leq 2\eta^{-2}K^3\min\{|T^+|,|S^+|\}.
\end{align*}
We now follow the proof of \cite[Proposition 6.1]{greruz::0}.  Let $n \in \N$ be the smallest positive integer for which there is a homomorphism $\phi:G \rightarrow \F_2^n$ such that $\phi$ restricted to $S^++T^+$ is injective.  Such exists since $S^++T^+$ is finite, so projection onto the (finite) group it generates is an example.

Suppose there is some $x \in \F_2^n\setminus (\phi(S^+)-\phi(S^+) +\phi(T^+)-\phi(T^+))$.  Let $\theta:\F_2^n \rightarrow \F_2^{n-1}$ be a homomorphism with $\ker \theta=\{0,x\}$.  Then $\theta \circ \phi:G \rightarrow \F_2^{n-1}$ is a homomorphism and so by minimality of $n$ there are elements $s,s' \in S^+$ and $t,t' \in T^+$ such that
\begin{equation*}
\theta\circ \phi(s+t)=\theta \circ \phi(s'+t')\text{ and } s+t \neq s'+t'.
\end{equation*}
It follows that
\begin{equation*}
\phi((s+t)-(s'+t')) \in\ker \theta=\{0,x\}.
\end{equation*}
Since $s +t \neq s'+t'$ we see $\phi(s)+\phi(t)-\phi(s')-\phi(t') =x$ which contradicts how $x$ was chosen.  It follows that
\begin{equation*}
2^n = |\phi(S^+)-\phi(S^+) +\phi(T^+)-\phi(T^+)|.
\end{equation*}
Let $\emptyset \neq Z \subset T^+$ be such that $|Z|^{-1}|S^++Z|$ is minimal so that
\begin{equation*}
\frac{|S^++Z|}{|Z|}=\min\left\{\frac{|S^++Z'|}{|Z'|}: \emptyset \neq Z' \subset Z\right\},
\end{equation*}
and similarly for $\emptyset \neq W \subset S^+$.  By \cite[Proposition 2.1]{pet::} (and the fact that $-A=A$ for all sets in $G$ and $\phi$ is injective on $S^++T^+$) we see that
\begin{align*}
2^n & = |\phi(S^+)-\phi(S^+) +\phi(T^+)-\phi(T^+)|\\
& = \frac{|S^+-S^++T^+-T^++\ker \phi|}{|\ker \phi|}\\
& = \frac{|2S^++2T^++\ker \phi|}{|\ker \phi|}\\
& \leq  \frac{|2S^++2T^++Z+W+\ker \phi|}{|\ker \phi|}\\
& \leq (2\eta^{-2}K^3)^4\frac{|Z+W+\ker \phi|}{|\ker\phi|}\\
& \leq (2\eta^{-2}K^3)^4\frac{|S^++T^++\ker \phi|}{|\ker \phi|} \leq 2^{4}\eta^{-10}K^{15}k.
\end{align*}
Finally, let
\begin{equation*}
A':=\left\{\phi(s_i+t_j): l \leq i \leq j \leq k-l\right\},
\end{equation*}
and let $s',t' \in (\F_2^m)^{k-2l+1}$ be defined by
\begin{equation*}
s'_i:=\phi\left(s_{l+i-1}\right) \text{ and } t'_j:=\phi\left(t_{l+j-1}\right) \text{ for }1 \leq i,j \leq k-2l+1.
\end{equation*}
Since $\phi$ is injective we see that $s_i'+t_j' \in A'$ if and only if $s_{l+i-1}+t_{l+j-1} \in A$, which is true if and only if $l+i-1 \leq l+j-1$, which in turn is true if and only if $i \leq j$.  We conclude that $A'$ has the $(k-2l+1)$-order property and we are done.
\end{proof}
The second ingredient is the following.
\begin{lemma}\label{lem.dense}
There is $c>0$ such that every $A \subset \F_2^n$ is $O(2^{n(1-c)})$-stable.
\end{lemma}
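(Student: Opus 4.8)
The plan is to read the order property as saying that the Cayley matrix $C \in \{0,1\}^{\F_2^n \times \F_2^n}$ with entries $C_{xy} = \mathbbm{1}_A(x+y)$ contains, on the (distinct) rows $s_1,\dots,s_k$ and columns $t_1,\dots,t_k$, the $k \times k$ upper-triangular all-ones matrix $T_k$ with $(T_k)_{ij} = \mathbbm{1}[i \leq j]$. Since $x+y$ is a symmetric function on $\F_2^n$, the matrix $C$ is diagonalised by the Walsh--Fourier characters: the eigenvector attached to $\gamma$ is $(\gamma(x))_x$ with eigenvalue $2^n\widehat{\mathbbm{1}_A}(\gamma)$. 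I would then run a structure-versus-randomness dichotomy driven by the size of the largest nontrivial coefficient $\Lambda := \max_{\gamma \neq 0} 2^n|\widehat{\mathbbm{1}_A}(\gamma)|$.

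First the quasirandom case, which is clean and lossless. The $\gamma=0$ eigenvalue contributes exactly $\delta J$ to $C$, where $\delta = |A|/2^n$ and $J$ is all-ones, so $C - \delta J$ has operator norm $\Lambda$. As $T_k - \delta J_k$ is a submatrix of $C - \delta J$, it has operator norm at most $\Lambda$. On the other hand subtracting the symmetric $\delta J_k$ does not change the antisymmetric part, which is $\tfrac12(T_k - T_k^{\mathsf T})$, i.e. half the fixed sign matrix $(\operatorname{sgn}(j-i))_{ij}$, whose operator norm is $\Omega(k)$ (test it against the all-ones vector); since $\|B\|_{\mathrm{op}} \geq \tfrac12\|B - B^{\mathsf T}\|_{\mathrm{op}}$ always, we get $\|T_k - \delta J_k\|_{\mathrm{op}} = \Omega(k)$. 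Hence $k = O(\Lambda)$, so if every nontrivial coefficient is at most $\epsilon$ then $k = O(2^n\epsilon)$.

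Next the structured case. If some $\gamma \neq 0$ has $|\widehat{\mathbbm{1}_A}(\gamma)| \geq \epsilon$, I would pass to the index-two subgroup $H = \ker\gamma \cong \F_2^{n-1}$: pigeonholing the $k$ indices by the pair of signs $(\gamma(s_i),\gamma(t_i)) \in \{\pm1\}^2$ keeps a class $I$ of $\geq k/4$ of them on which $\gamma(s_i)$ and $\gamma(t_j)$ are both constant, so $s_i+t_j$ lies in a single coset of $H$ for all $i,j \in I$; translating that coset onto $H$ produces $A' \subseteq \F_2^{n-1}$ with the order property of size $\geq k/4$. I would also record the rigidity that feeds the energy side: a collision $s_i+t_j = s_{i'}+t_{j'}$ with $i\leq j,\ i'\leq j'$ forces $\max(i,i')\leq\min(j,j')$ (apply the pattern to the equal elements $s_i+t_{j'}=s_{i'}+t_j$), and with the trivial Cauchy--Schwarz lower bound this gives $\sum_w r(w)^2 \geq \tfrac14 k^4/2^n$, where $r(w) = \#\{(i,j): i\leq j,\ s_i+t_j = w\}$.

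The main obstacle is converting the structured case into a genuine power saving. One-shot removal of all coefficients exceeding $\epsilon$ fails: there can be $\asymp \epsilon^{-2}$ of them, the subgroup one must quotient by has codimension $\asymp \epsilon^{-2}$, and the pigeonholing then costs a factor $2^{\Theta(\epsilon^{-2})}$ that dwarfs any saving of the form $2^{-cn}$. Iterating the codimension-one step is more promising, but density is not a valid monovariant: the coset forced by the pigeonhole need not be the denser one, and Fourier mass can regenerate under restriction, so the number of steps is not obviously $o(n)$. I would therefore try to cap the iteration by an additive-combinatorial input --- either a Balog--Szemer\'edi--Gowers/Freiman analysis of the energy lower bound above, capturing the structured part inside a subspace of bounded codimension in boundedly many steps, or a direct argument beating the trivial bound $\sum_w r(w)^2 = O(k^3)$ by a fixed polynomial factor using the rigidity $\max(i,i')\leq\min(j,j')$ (a gain of $k^{-\delta_0}$ there yields $c = \delta_0/(1+\delta_0)$ once combined with the energy lower bound). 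Keeping every loss polynomial while making one of these reconciliations work is, I expect, the crux of the lemma.
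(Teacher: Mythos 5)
Your dichotomy is set up correctly --- the unconditional bound $k = O(\Lambda)$ via the antisymmetric part of $T_k$ is sound, as is the single codimension-one restriction step --- but, as you yourself concede at the end, the two halves never assemble into a proof, and this is a genuine gap rather than a loose end. Concretely: write $f(n)$ for the largest $k$ such that some subset of $\F_2^n$ has the $k$-order property. Your structured step gives only $f(n) \leq 4f(n-1)$, and nothing in your scheme prevents that case from occurring at every one of $n$ successive scales, since (as you note) there is no monovariant: the pigeonhole may force you into the sparser coset, and Fourier mass regenerates under restriction. Any induction of the form $f(n) \leq C2^{(1-c)n}$ must survive the step $f(n) \leq 4f(n-1) \leq 2^{1+c}\cdot C2^{(1-c)n}$, which fails for every $c \geq 0$; even if the pigeonhole loss of $4$ were improved to $2$, the iteration would at best reproduce the trivial bound $O(2^n)$ of Proposition \ref{prop.upper}, never a power saving. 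This is the same wall that Fourier/density-increment arguments hit on the cap-set problem, and the lemma is precisely a power-saving statement.

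Both repairs you float fail for identifiable reasons. The rigidity route is refuted by a remark in the paper immediately after this lemma: there exist $k \times k$ matrices satisfying exactly your collision constraint (equivalently, $M_{ij} \neq M_{i'j'}$ whenever $i \leq j < i' \leq j'$), with distinct entries in every row and column, yet taking only $O(k\log k)$ distinct values; by Cauchy--Schwarz such a matrix has $\sum_w r(w)^2 = \Omega(k^3/\log k)$, so no bound $O(k^{3-\delta_0})$ can be extracted from the index pattern alone --- any successful argument must use the realizability of $M_{ij} = s_i + t_j$ inside $\F_2^n$, which is exactly what a purely combinatorial rigidity argument discards. The Balog--Szemer{\'e}di--Gowers/Freiman route is vacuous in the regime that matters: in this lemma $k$ may be as large as a fixed power of $2^n$, and then any conclusion of the shape $|S'+T'| = O(K^{O(1)}k)$ is trivially true because the ambient group itself has size $2^n = O(K^{O(1)}k)$; indeed additive-combinatorial modelling is what Lemma \ref{lem.deco} uses to \emph{reduce} Theorem \ref{thm.main} to the present dense lemma, so invoking it inside the lemma is circular. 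What actually closes the argument in the paper is the Croot--Lev--Pach rank method: in characteristic $2$ the diagonal values $s_i + t_i$ are pairwise distinct; a polynomial $P$ of degree $d \approx 2n/3$ vanishing off $A$, chosen with maximal support, is nonzero on at least half of them; and the rank of $(P(s_i+t_j))_{i,j}$ is then at least $k/2$ by the triangular structure, but at most $2\dim S_n^{d/2} \leq 2^{H(2/3)n + O(1)}$ by the CLP decomposition, giving $k \leq 2^{H(2/3)n+O(1)}$ with $H(2/3) < 1$. That rank sandwich, in place of your spectral-norm sandwich, is the missing idea.
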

We shall use the polynomial method from the work of Croot, Lev and Pach \cite{crolevpac::}, though we follow the sequel \cite{ellgij::0} by Ellenberg and Gijswijt.

We write $S_n$ for the $\F_2$-vector space of maps $\F_2^n \rightarrow \F_2$; put
\begin{equation*}
M_n^d:=\left\{\F_2^n \rightarrow \F_2; x \mapsto \prod_{i \in I}{x_i}: I \subset [n] \text{ has } |I| \leq d\right\},
\end{equation*}
so that $M_n^d$ is linearly independent; let $S_n^d$ be the subspace of $S_n$ with $M_n^d$ as a basis; and write $M_n:=M_n^n$ for the basis of monomials for $S_n$, so
\begin{equation*}
\dim S_n^d=\sum_{r=0}^d{\binom{n}{r}} = \sum_{r=n-d}^n{\binom{n}{r}}.
\end{equation*}
This can be estimated using $H$, the binary entropy function -- we refer to \cite[\S11, Chapter 10]{macslo::0} for the relevant estimate.\footnote{For completeness $H:[0,1] \rightarrow \R; p \mapsto -p\log_2 p -(1-p)\log_2 (1-p)$ with the usual conventions that $H(0)=H(1)=0$, and \cite[Lemma 8, \S11, Chapter 10]{macslo::0} tells us that
\begin{equation}\label{eqn.entropy}
\sum_{r=pn}^{n}{\binom{n}{r}} \leq 2^{H(p)n} \text{ whenever }p \in \left(\frac{1}{2},1\right] \text{ and }pn \in \Z.
\end{equation}}
We turn now to the proof.
\begin{proof}
Suppose that $k$ is maximal such that $A$ has the $k$-order property; $s,t \in (\F_2^n)^k$ witness the $k$-order property in $A$; and write $S:=\{s_i: 1\leq i \leq k\}$ and $T:=\{t_j: 1 \leq j \leq k\}$. 
Suppose that there are elements $1 \leq i,j \leq k$ such that $s_i+t_i = s_j+t_j$.  Without loss of generality we have $i \leq j$ and hence (since $2\cdot t_i=0_{\F_2^n}$ and $-t_j=t_j$)
\begin{equation}\label{eqn.problem}
s_j+t_i = s_j-t_i= s_i-t_j=s_i+t_j \in A.
\end{equation}
It follows that $i=j$, and so $\{s_i+t_i : 1 \leq i \leq k\}$ is a set of $k$ distinct elements -- write $A_0$ for this set.

The remainder of the proof follows that of \cite[Theorem 4]{ellgij::0} very closely.  Let $p \in \left(\frac{1}{2},1\right]$ be a constant to be optimised later with $np$ an odd integer, and suppose that $k \geq 2^{H(p)n+1}$.  Write $d:=np-1$ (which is an even integer) and so by (\ref{eqn.entropy})
\begin{align*}
\frac{1}{2}k\geq 2^{H(p)n} & \geq \sum_{r=d+1}^n{\binom{n}{r}} = 2^n - \sum_{r=0}^d{\binom{n}{r}}.
\end{align*}
Writing $V:=S_n^d \cap \{F:\F_2^n \rightarrow \F_2 : F(x)=0_{\F_2} \text{ for all }x \in \neg A\}$ and rearranging the above we get
\begin{equation*}
\dim V \geq \sum_{r=0}^d{\binom{n}{r}} - |\neg A| \geq |A| - \frac{1}{2}k.
\end{equation*}
Let $P \in S_n^d$ be a polynomial that is $0_{\F_2}$ on $\neg A$ and of maximal support and write $\Sigma$ for the support of $P$.  If $|\Sigma| < \dim V$ then there would be some $Q \in V$ not identically $0_{\F_2}$ with $Q(x)=0_{\F_2}$ for all $x \in \Sigma$, so that $Q+P$ would have larger support.  We conclude that the support of $P$ has size at least $|A| - \frac{1}{2}k$ and so includes at least half of $A_0$.

Write $I:=\{1 \leq i \leq k: P(s_i+t_i) \neq 0_{\F_2}\}$ so that $|I| \geq \frac{1}{2}k$.  The matrix $(P(s+t))_{s \in S,t\in T}$ includes the rows $(P(s_i+t_j))_{j=1}^{k}$ for $i \in I$.  If $i \in I$ then $P(s_i+t_i) \neq 0_{\F_2}$ and $P(s_i+t_j) = 0_{\F_2}$ for all $1 \leq j <i$, and so the rows generate a space of dimension at least $|I|$, and hence $\rk_{\F_2}(P(s+t))_{s \in S,t\in T} \geq \frac{1}{2}k$.

On the other hand (as in \cite[(1)]{ellgij::0}) there are constants $c_{m,m'}\in \F_2$ such that
\begin{align*}
P(x+y)& =\sum_{m,m' \in M_n^d:\deg mm' \leq d}{c_{m,m'}m(x)m'(y)}\\
& = \sum_{m \in M_n^{\frac{d}{2}}}{m(x)F_m(y)} + \sum_{m' \in M_n^{\frac{d}{2}}}{F_{m'}'(x)m'(y)} \text{ for all }x,y \in \F_2^n.
\end{align*}
It follows from (\ref{eqn.entropy}) again that
\begin{align*}
\rk_{\F_2} (P(s+t))_{s \in S,t \in T} \leq 2\dim S_n^{\frac{d}{2}}& \leq 2\sum_{r=0}^{\frac{d}{2}}{\binom{n}{r}} = 2\sum_{r=n-\frac{d}{2}}^n{\binom{n}{r}} \leq 2^{1+H\left(1-\frac{p}{2}+\frac{1}{2n}\right)n},
\end{align*}
and since $H$ is decreasing on $\left[\frac{1}{2},1\right]$ we conclude that
\begin{equation*}
k \leq \max\left\{2^{H(p)n+1},2^{H\left(1-\frac{p}{2}\right)n + 2}\right\}.
\end{equation*}
We get the result on putting $p:=\frac{2}{3}+O(n^{-1})$, so that $H(p) =H\left(1-\frac{p}{2}\right)+o(1)$.
\end{proof}
It may be worth noting that (\ref{eqn.problem}) is a special case of a more general fact.  Suppose that $s,t \in (\F_2^\infty)^k$ witness the $k$-order property in $A$ and consider the $\F_2^\infty$-valued matrix $M$ with $M_{ij}:=s_i+t_j$.  If $1 \leq i\leq j < i' \leq j' \leq k$ are such that $M_{ij}=M_{i'j'}$, then
\begin{equation*}
M_{i'j}=s_{i'}+t_{j}  = s_i + M_{ij} + t_{j'} + M_{i'j'} = M_{ij'} + 2M_{ij} = M_{ij'},
\end{equation*}
which contradicts the fact that $1_A(M_{i'j})=0 \neq 1 = 1_A(M_{ij'})$.  Put another way we have $M_{ij}\neq M_{i'j'}$ whenever $1 \leq i\leq j < i' \leq j' \leq k$.  One might hope that this condition alone requires $M$ to take many different values (and hence $A$ to be large compared with $k$).  However, it is possible to construct a matrix satisfying this property (and having distinct values in every row and column) using $O(k\log k)$ distinct elements.
%%\begin{equation*}
%%\left(\begin{array}{ccccccccc }a_1 & b_1 & c_1 & d_1 & \cdots  & & &\\ 0 & a_2 & b_1 & c_1 && & &\\ \vdots &  & a_3 & b_2 & c_1 &\ddots & &\\ & &  & a_4 & b_2 & c_2& & \vdots \\ & & & \ddots & a_5 & b_3 & \ddots &d_{\lceil \frac{k-3}{4}\rceil} \\ & & & &  & a_6 & \ddots  & c_{\lceil \frac{k-2}{3}\rceil}  \\ & & & & && \ddots& b_{\lceil \frac{k-1}{2}\rceil}\\0 & & &\cdots &  &  & 0 & a_k
%%\end{array}\right)
%%\end{equation*}
%%which uses
%%\begin{equation*}
%%k + \left\lceil \frac{k-1}{2}\right\rceil  + \left\lceil\frac{k-2}{3}\right\rceil + \cdots + \left\lceil\frac{k-(k-1)}{k}\right\rceil  = O(k\log k)
%%\end{equation*}
%%distinct elements.  (We can adjust this to ensure every column and every row contains $k$ distinct elements by adding at most $k-1$ new elements to replace the $0$s.)

Our argument is very similar to the arguments of Dvir and Edelman \cite{dviede::0} who apply the Croot-Lev-Pach method to examine the rigidity \cite[Definition, \S6]{val::0} of certain random matrices.  The matrix we have to consider is the `all-ones' upper triangular matrix and as it happens the rigidity of this has been explicitly calculated in \cite[Theorem 1]{pudvav::0}.  It is very natural to imagine more can be made of this structure.

With these two lemmas we can prove our main result.
\begin{proof}[Proof of Theorem \ref{thm.main}]
Suppose that $A$ has the $k$-order property.  Let $K:=N/k$ and apply Lemma \ref{lem.deco} with $l:=\left\lfloor \frac{k}{4}\right \rfloor$ to get $n \in \N$ and a set $A'\subset \F_2^n$ that has the $\frac{1}{2}k$-order property where $2^n \leq O(K^{15}k)$.  Then apply Lemma \ref{lem.dense} to $A'$ to get an absolute $c_0 \in (0,1)$ such that $\frac{1}{2}k=O(K^{15(1-c_0)}k^{1-c_0})$.  The result follows with $c=c_0/(15-14c_0)$.
\end{proof}
The extension of Theorem \ref{thm.main} to groups of bounded exponent seems interesting, though there the constant $c$ would have to depend on the exponent since Proposition \ref{prop.ap} extends from integers to any Abelian group  if we add the hypothesis $|A+A|=2|A|-1$.

The proof of Lemma \ref{lem.deco} extends easily, as does much of the proof of Lemma \ref{lem.dense}.  In particular, the Croot-Lev-Pach method has been extended to groups of bounded exponent (see \emph{e.g.} the proof of \cite[Theorem A]{blachu::0}).  However, (\ref{eqn.problem}) relies on working in characteristic $2$ and this would need to be replaced in the more general setting.

It remains to prove Proposition \ref{prop.lower}; we shall show the following explicit version.
\begin{proposition}
For all $N \in \N$ there is a set $A \subset \F_2^\infty$ of size $N$ that is not $N^{\frac{1}{2-c}-o(1)}$-stable where $c=\log_8\left(1+\frac{5-2\sqrt{2}}{3+2\sqrt{2}}\right)=0.152\dots$
\end{proposition}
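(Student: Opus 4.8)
The plan is to construct, for infinitely many special sizes, a set realising the $k$-order property with $|A|$ as small as possible relative to $k$, and then to pad to an arbitrary target $N$. For a witnessing pair $s,t\in G^k$ write $B:=S+T=\{s_i+t_j:1\le i,j\le k\}$ for the full set of realised sums, and call the configuration \emph{separated} if the strictly-upper values $A^{<}:=\{s_i+t_j:i<j\}$ are disjoint from the diagonal values $A^{=}:=\{s_i+t_i\}$; the latter are automatically distinct by the remark following (\ref{eqn.problem}). The quantity to minimise is the rate $\log_k|A|$. Since the naive realisation $A=\{s_i+t_j:i\le j\}$ with all sums distinct has $|A|=\binom{k+1}{2}$ and rate tending to $2$ (this is \cite[Example 3]{terwol::0}), proving the explicit version of Proposition \ref{prop.lower} amounts to realising the upper-triangular all-ones incidence pattern additively so that the sum-matrix $(s_i+t_j)_{i,j}$ takes only $|B|=k^{2-c-o(1)}$ distinct values. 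This is precisely the rigidity viewpoint flagged after (\ref{eqn.problem}) and in the discussion of \cite{pudvav::0,dviede::0}.

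First I would record a composition (tensor) lemma. Given separated configurations of orders $k_1,k_2$ with sum-sets $B_1,B_2$, the lexicographically-ordered product $S_{(i_1,i_2)}:=(s^{(1)}_{i_1},s^{(2)}_{i_2})$, $T_{(j_1,j_2)}:=(t^{(1)}_{j_1},t^{(2)}_{j_2})$ in $G_1\oplus G_2$ realises the $(k_1k_2)$-order property for
\[
A:=(A_1^{<}\times B_2)\cup(A_1^{=}\times A_2),
\]
is again separated, and satisfies
\[
|S+T|=|B_1|\,|B_2| \qquad\text{and}\qquad |A|=|A_1^{<}|\,|B_2|+k_1|A_2|.
\]
The verification is a routine case analysis on whether $i_1<j_1$, $i_1=j_1$ or $i_1>j_1$: membership matches the lexicographic order, separation of the first factor prevents diagonal and strictly-upper first-coordinates from colliding, and separation together with the diagonal count $|A^{=}|=k$ is inherited. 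The upshot is that $k$ and $|S+T|$ are both multiplicative, so self-composition preserves the rate $\log_k|S+T|$; composition only averages rates, and is used merely to propagate a good base to all scales.

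The heart of the matter is to exhibit a base of small rate, which I would build recursively on $\F_2^{3m}$ with order $k_m=8^m$ by a block-refinement of the upper-triangular pattern. Splitting the index set places two copies of the previous pattern on the diagonal and a single complete all-ones block off it; the point is that the all-ones block carries no internal order constraint, so its sums may be collapsed onto few values by routing them through a small set of fresh flag-coordinates. Tracking the two contribution types (the diagonal pieces against the collapsed off-diagonal piece) yields a coupled linear recursion for the reduced quantity $|S+T|/k$ whose transfer matrix I expect to have trace $6$ and determinant $1$, hence characteristic polynomial $\lambda^2-6\lambda+1$ and dominant root $3+2\sqrt2$. Then $|S+T|/k_m=(3+2\sqrt2)^m(1+o(1))$, so that
\[
\log_{k_m}|S+T|=1+\log_8\bigl(3+2\sqrt2\bigr)+o(1)=\log_8\bigl(8(3+2\sqrt2)\bigr)+o(1)=2-c+o(1),
\]
using the identity $1+\tfrac{5-2\sqrt2}{3+2\sqrt2}=8(3-2\sqrt2)$, so that $c=1+\log_8(3-2\sqrt2)$ and $2-c=1+\log_8(3+2\sqrt2)$. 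Since $|A|\le|S+T|$ while $|A|=\Theta(|S+T|)$ in the self-composed family, the set size obeys the same rate.

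Finally, to hit an arbitrary $N$ I would take the largest member of the family with $|A|\le N$, embed it in $\F_2^\infty$, and enlarge $A$ to size exactly $N$ by adjoining dummy elements supported on fresh coordinates disjoint from those used by $S\cup T$, so that no new sum $s_i+t_j$ can meet them and the order property is preserved. The rate being attained only in the limit $m\to\infty$ forces the exponent to be $\tfrac1{2-c}-o(1)$ rather than $\tfrac1{2-c}$, and the rounding between consecutive family sizes is absorbed into the same $o(1)$. I expect the main obstacle to be the recursive gadget itself: one must choose the flag-coordinates so that (i) the four blocks occupy distinct flags and never interfere, (ii) no lower-triangular sum ever lands in $A$, and (iii) separation survives every stage, all while the off-diagonal collapse is efficient enough to realise the trace-$6$, determinant-$1$ recursion exactly. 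It is this simultaneous bookkeeping, rather than any single inequality, that will be delicate.
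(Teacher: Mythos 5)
Your outer framework is sound---the composition lemma checks out (the lexicographic product of separated configurations is separated, realises the product order pattern, and has $|S+T|=|B_1||B_2|$ and $|A|=|A_1^{<}||B_2|+k_1|A_2|$), the algebraic identities relating $c$ to $3+2\sqrt{2}$ are correct, and padding with elements on fresh coordinates is fine---but the entire content of the proposition lives in the base construction, and you have not provided it. As you note yourself, self-composition can only propagate a rate that some base already achieves, so everything rests on the recursive gadget; yet that gadget is described only by the properties you \emph{expect} it to have (a transfer matrix of trace $6$ and determinant $1$), and you explicitly flag its construction as the main obstacle. Moreover, the parameters you announce are impossible as stated: a configuration with the $k$-order property forces $|A|\geq k$ (the argument of Proposition \ref{prop.upper}), and once $k\geq 2$ the set $A$ is a proper subset of the ambient group, so order $k_m=8^m$ cannot be realised inside $\F_2^{3m}$, which has exactly $8^m$ elements; your own claim $|S+T|\approx\bigl(8(3+2\sqrt{2})\bigr)^m$ exceeds $8^m$ exponentially. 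Fresh flag coordinates can repair the dimension count, but the deeper issue is that ``collapsing the all-ones block'' is not bookkeeping: in an additive realisation the off-diagonal sums $s_i+t_j$ are forced by the already-chosen $s_i$ and $t_j$, so making them coincide requires building additive structure into $S$ and $T$ that must coexist with the order constraints of the diagonal blocks. That tension is the whole problem, and the proposal does not resolve it.

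For comparison, the paper resolves it with a one-shot design rather than a recursion. With $R=\binom{2l}{l}$, enumerate the $l$-element subsets $S_1,\dots,S_R$ of $[2l]$ so that $S_r$ and $S_{R+1-r}$ are complementary, and use the coordinate subspaces $V_S$. The witnesses are $s_{a+2^l(b-1)}=u_b+v_a^{(b)}$ and $t_{a'+2^l(b'-1)}=w_{b'}+v_{a'}^{(R+1-b')}$, where the translates $u_i+w_j+V_{[2l]}$ are pairwise disjoint. The block of sums indexed by $(b,b')$ with $b<b'$ lies in a single coset of $V_{S_b}+V_{S_{R+1-b'}}$, of size $2^{2l-|S_b\cap S_{R+1-b'}|}$---this is exactly the collapse your gadget is supposed to perform---while each diagonal block $b=b'$ is a naive upper-triangular pattern of order $2^l$ inside a coset of $V_{[2l]}$. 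The constant then emerges from the count $|A|\leq 2^{2l}\sum_{i,j}2^{-|S_i\cap S_j|}=\binom{2l}{l}2^{2l}\sum_{s}\binom{l}{s}^2 2^{-s}\leq\bigl(8(3+2\sqrt{2})\bigr)^l$ against $k=\binom{2l}{l}2^l=8^{(1-o(1))l}$: that is, $3+2\sqrt{2}=2\bigl(1+2^{-1/2}\bigr)^2$ arises from a binomial sum, not as the eigenvalue of a transfer recursion. Your numerology is reverse-engineered to the right constant, but without an explicit construction realising it there is no proof.
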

\begin{proof}
Write $G:=\F_2^{\infty}$; let $l \in \N$ be a parameter to be optimised later; let $R:=\binom{2l}{l}$; and let $S_1,\dots,S_R$ be an enumeration of the subsets of $[2l]$ of size $l$ such that $S_r \cup S_{R+1-r}=[2l]$ for all $r \in \{1,\dots, R\}$.  (For example, proceed iteratively: first select $S_1$ arbitrarily, then put $S_R:=[2l] \setminus S_1$; select $S_2$ from what remains and put $S_{R-1}:=[2l]\setminus S_2$; \emph{etc}.)  Write
\begin{equation*}
V_S:=\left\{x \in G: x_i=0 \text{ whenever }i \not \in S\right\} \text{ for }S \subset [2l].
\end{equation*}
Let $u_1,\dots,u_R,w_1,\dots,w_R \subset G$ be such that
\begin{equation}\label{eqn.spread}
u_i+w_j+V_{[2l]} \text{ are pairwise disjoint for }1 \leq i,j \leq R.
\end{equation}
(For example by selecting greedily.) For each $1\leq i \leq R$, let $v_1^{(i)},\dots,v_{2^{l}}^{(i)}$ be an enumeration of $V_{S_i}$.  Finally, let
\begin{equation*}
\Delta_{i,j}:=\begin{cases}V_{S_{i}}+V_{S_{R+1-j}} & \text{ if } i <j\\
\left\{v_m^{(i)} + v_n^{(R+1-i)}: 1 \leq m \leq n \leq 2^{l}\right\} & \text{ if }i =j\end{cases}
\end{equation*}
and
\begin{equation*}
A:=\bigcup_{1 \leq i \leq j \leq R}{\left(u_i+w_j + \Delta_{i,j}\right)}.
\end{equation*}
Now
\begin{align*}
|A| \leq 2^{2l}\sum_{1 \leq i,j\leq R}{2^{-|S_i \cap S_j|}} &= 2^{2l}\sum_{S,T \subset [2l], |S|=|T|=l}{2^{-|S\cap T|}}\\ & = 2^{2l}\sum_{s=0}^l{\binom{2l}{s}\binom{2l-s}{l-s}\binom{l}{l-s}2^{-s}}\\ &=\binom{2l}{l}2^{2l}\sum_{s=0}^l{\binom{l}{s}^22^{-s}}\\
& \leq 2^{4l}\left(\sum_{s=0}^l{\binom{l}{s}\sqrt{2}^{-s}}\right)^2 = 2^{4l}\left(1+\frac{1}{\sqrt{2}}\right)^{2l}.
\end{align*}
%% It is tempting to wonder if the last inequality is quite weak, but we are only interested in the answer up to powers of $l$ and so looking at the contribution from the term $s=\lfloor cl\rfloor$ which can be maximised in $c$ to give a term of the same side as the right.
Now let
\begin{equation*}
s=(s_1,\dots,s_{R2^l}):=\left(u_1+v_1^{(1)},\dots,u_1+v_{2^l}^{(1)},u_2+v_1^{(2)},\dots,u_R+v_{1}^{(R)},\dots,u_R+v_{2^l}^{(R)}\right)
\end{equation*}
and
\begin{equation*}
t=(t_1,\dots,t_{R2^l}):=\left(w_1+v_1^{(R)},\dots,w_1+v_{2^l}^{(R)},w_2+v_1^{(R-1)},\dots,w_R+v_{1}^{(1)},\dots,w_R+v_{2^l}^{(1)}\right).
\end{equation*}
Suppose that $1 \leq i,j \leq R2^l$, and let $1 \leq b,b' \leq R$ and $1 \leq a,a' \leq 2^l$ be the unique integers such that $i=a+2^l(b-1)$ and $j=a' + 2^l(b'-1)$, so that
\begin{equation}\label{eqn.diver}
s_i+t_j =u_b + w_{b'} +v_a^{(b)} + v_{a'}^{(R+1-b')}.
\end{equation}
If $i \leq j$ then either
\begin{enumerate}
\item $b < b'$, in which case $s_i+t_j \in u_b+w_{b'} + V_{S_{b}} + V_{S_{R+1-b'}} \subset A_0 \subset A$;
\item or $b=b'$, in which case $a \leq a'$ and $s_i+t_j  \in u_b+w_b + \Delta_{b,b} \subset A$.
\end{enumerate}
In the other direction, suppose $s_i+t_j \in A$.  Then by definition of $A$ there are some $1 \leq i' \leq j' \leq R$ such that $s_i+t_j \in u_{i'}+w_{j'}+\Delta_{i',j'}$.  By (\ref{eqn.diver}) we have $s_i+t_j \in u_b+w_{b'}+V_{[2l]}$ and so by (\ref{eqn.spread}) we see that $i'=b$ and $j'=b'$.  Either
\begin{enumerate}
\item $b \neq b'$, in which case the fact that $b=i' \leq j'=b'$ implies that $b<b'$ and hence $i<j$;
\item or $b=b'$, in which case $i'=j'=b$ and there are elements $1 \leq m' \leq n' \leq 2^l$ such that $s_i+t_j = u_b+w_b+ v_{m'}^{(b)} + v_{n'}^{(R+1-b)}$, which substituted into (\ref{eqn.diver}) gives $v_a^{(b)} + v_{a'}^{(R+1-b)}=v_{m'}^{(b)} + v_{n'}^{(R+1-b)}$.  Since $S_b \cap S_{R+1-b} =\emptyset$ it follows that $a=m'$ and $a'=n'$ so that $i \leq j$ since $m' \leq n'$.
\end{enumerate}
It follows that $A$ has the $R2^l$-order property, and taking $l$ suitably in terms of $N$ gives the result.
\end{proof}

\section*{Acknowledgements}

My thanks to Julia Wolf for useful conversations on this topic, and to an anonymous referee for careful reading of the paper and in particular highlighting an error in the proof of Lemma \ref{lem.deco}.

\bibliographystyle{halpha}

\bibliography{references}

\end{document}